\def\redH{\tilde{H}}
\newcommand{\F}{{\mathbb{F}}}
\newcommand{\N}{{\mathbb{N}}}
\newcommand{\R}{\mathbb{R}}
\newcommand{\Q}{\mathbb{Q}}
\newcommand{\Z}{{\mathbb{Z}}}
\newcommand{\orbSp}[2]{#1/{#2}}
\newcommand{\inv}[2]{#1^{#2}}
\newcommand{\redBor}[2]{#1\rtimes_{#2} E#2}
\newcommand{\bor}[2]{#1 \times_{#2} E#2}
\newcommand{\stuntRedBor}[2]{#1\rtimes_{#2} E_\infty^1 #2}
\newcommand{\stuntBor}[2]{#1\times_{#2} E_\infty^1 #2}
\def\Loop{\Omega}
\def\pt{{\mathrm{pt}}}
\newcommand{\RedCoprod}{\bigvee}
\def\union{\cup}
\newcommand{\redProd}{\wedge}
\newcommand{\sus}{\Sigma}
\newcommand{\onto}{\twoheadrightarrow}
\newcommand{\inc}{\hookrightarrow}
\newcommand{\proj}{\mathop{\mathrm{proj}}}
\def\isom{\cong}
\newcommand{\DirSum}{\bigoplus}
\newcommand{\ivBrack}[1]{[#1]}
\newcommand{\redBetti}[2]{\tilde{b}_{#2}(#1)}
\newcommand{\redPoinSer}[1]{\tilde{P}\! \left(#1\right)}
\newcommand{\tilDelta}{\tilde{\Delta}}
\newcommand{\projSp}[2]{#2 {\mathbb{P}}^{#1}}
\newcommand{\redDiag}[1]{\overline{\Delta}_{#1}}
\newcommand{\ind}{x}
\newcommand{\multInd}{\N^{<\omega}}
\renewcommand{\dim}[1]{\mathop{\mathrm{dim}} #1}
\newcommand{\length}[1]{|#1|}
\newenvironment{nouppercase}{%
  \renewcommand{\uppercasenonmath}[1]{}}{}
\newtheorem{theorem}{Theorem}
\numberwithin{theorem}{section}
\newtheorem{lemma}[theorem]{Lemma}
\newtheorem{proposition}[theorem]{Proposition}
\newtheorem{corollary}[theorem]{Corollary}
\theoremstyle{definition}
\newtheorem{example}[theorem]{Example}
\begin{document}

\title[Loop homological invariants]{Loop homological invariants \\ associated to real projective spaces}
\author{Man Gao \and Colin Tan \and Jie Wu}
\date{}
\address{Man Gao,
3A Happy Avenue North, Happy Mansion, Singapore 369741}
\email{gao2man@nus.edu.sg}
\address{Colin Tan, Department of Mathematics, National University of Singapore, Block
S17, 10 Lower Kent Ridge Road, Singapore 119076}
\email{colinwytan@gmail.com}
\address{Jie Wu, Department of Mathematics, National University of Singapore, Block
S17, 10 Lower Kent Ridge Road, Singapore 119076}
\email{matwuj@nus.edu.sg}

\thanks{
Jie Wu was partially supported by the research grants Tier 1 WBS No. R-146-000-190-112
    and NSFC of China (grant number 11329101).
}

\begin{nouppercase}
\maketitle
\end{nouppercase}

\begin{abstract}
Let $A$ be a based subspace of $Y$.
Under the assumptions that
        $Y$ is path-connected
        and that the reduced diagonal map of $A$
            induces the zero map in all mod 2 reduced homology groups,
    we compute a formula for the mod 2 reduced Poincar\'{e} series of the loop space
        $\Loop ((A\redProd \projSp{\infty}{\R}) \union_{A\redProd \projSp{1}{\R}} (Y\redProd \projSp{1}{\R}))$.
    Here $\projSp{\infty}{\R}$ and $\projSp{1}{\R}$
        denote the infinite real projective space and the real projective line respectively.
\end{abstract}

\bigskip\noindent {\it Keywords}: Loop space, Poincar\'{e} series, real projective space

\bigskip\noindent {\it Mathematics Subject Classification (2010)}: 05A15, 55P35, 55N10

\section{Introduction}

A general problem in algebraic topology is to compute the homology of a loop space.
More precisely, for coefficients in a field $K$ and given a pointed space $X$,
                        one can ask to determine the homology $H_*(\Loop X; K)$ as a Hopf algebra.
Here the multiplication of $H_*(\Loop X; K)$ is induced by multiplication of loops $\mu: \Loop X\times \Loop X\to \Loop X$,
    while the comultiplication of $H_*(\Loop X; K)$ is induced by the diagonal map $\Delta_{\Loop X}: \Loop X\to \Loop X\times \Loop X$.
In the case where $X = \Sigma Y$ is the suspension of a path-connected space $Y$,
    this was determined by Bott-Samelson \cite{BottSamelson54}.
     They proved that $H_*(\Loop \Sigma Y; K)$ is isomorphic as a Hopf algebra to the tensor algebra $T(\redH_*(Y;K))$ of the reduced homology of $Y$,
        with the comultiplication of the tensor algebra determined on generators by the comultiplication of $\redH_*(Y; K)$.

In the case where $K$ is of characteristic zero, say $K$ is the field $\Q$ of rational numbers,
    Milnor-Moore proved that $H_*(\Loop X; \Q)$ is isomorphic as a Hopf algebra
        to the universal enveloping algebra $U(\pi_*(\Loop X) \otimes \Q)$ where the Lie bracket on $\pi_*(\Loop X) \otimes \Q$
            is given by the Samelson product \cite{MilnorMoore65}.
However, the structure of Hopf algebras is less understood when $K$ is of characteristic a prime $p$ (although see \cite{Hubbuck81, Hubbuck82}).
In this article, we will be interested in the case where $p = 2$.

When unable to determine the homology $H_*(\Loop X; K)$ as a Hopf algebra,
     one can forget the multiplication and comultiplication, asking only to compute the reduced Poincar\'{e} series of $\Loop X$.
Recall that, for $W$ a pointed space each of whose homology groups $H_q(W; K)$ are finite-dimensional $K$-vector spaces,
    its $q$th {\emph{reduced Betti number}} $\redBetti{W; K}{q}$ is the dimension of the $K$-vector space $\redH_q(W;K)$
    and its {\emph{reduced Poincar\'{e} series}} is the ordinary generating function of its reduced Betti numbers,
        namely the formal power series $\redPoinSer{W;K} := \sum_{q\ge 0} \ind^q \redBetti{W; K}{q}$.
        For example, the Bott-Samelson theorem described above implies that,
            if $Y$ is a path-connected space each of whose homology groups $\redH_q(Y;K)$ are finite-dimensional, then
\begin{equation} \label{eq: redPoinSerOfLoopSus}
         \redPoinSer{\Loop\sus Y;K} = \frac{\redPoinSer{Y;K}}{1-\redPoinSer{Y;K}}.
         \end{equation}

For $K$ of arbitrary characteristic, a standard strategy to compute the loop space homology $H_*(\Loop X; K)$ is
    to consider the Serre spectral sequence \cite{Serre51} and the Eilenberg-Moore spectral sequence \cite{EilbenbergMoore66}
                associated to the path-loop fibration $\Loop X \to P X \to X$.

Another strategy is to construct a topological monoid whose underlying space is $\Loop X$.
Again considering the example where $X = \sus Y$ is the suspension of a path-connected space $Y$,
                James proved that the reduced free topological monoid $J[Y]$ has homotopy type $\Loop\Sigma Y$
                    and used the associated word filtration to prove the suspension splitting $\Sigma \Loop \Sigma Y \simeq \RedCoprod_{s\ge 1} \Sigma Y^{\redProd s}$ \cite{James55}.
This gives another proof of \eqref{eq: redPoinSerOfLoopSus}.
The idea behind this strategy is to exploit the strictly associative multiplication of the constructed topological monoid.
This strict associativity is easier to exploit that the homotopy coherent associativity of the multiplication of loops $\mu: \Loop X\times \Loop X\to\Loop$
    which gives $\Loop X$ its $A_\infty$-space structure \cite{Stasheff63}.

In addition, one can apply techniques from simplicial homotopy theory.
For $X$ a reduced simplicial set,
    Kan constructed a free simplicial group $GX$ whose underlying space is $\Loop X$ \cite{Kan58}.
Taking $K$ to be the field $\F_2$ of two elements, Bousfield-Curtis used Kan's construction to develop a spectral sequence
    which converges $H_*(\Loop X; \F_2)$ when $X$ is simply connected \cite{BousfieldCurtis70}.
A consequence of their work is the following result (see proposition \ref{prop: BousfieldCurtis} below): If $X$ is a simply-connected pointed space whose
reduced diagonal map $\redDiag{X} : X\to X\redProd X$ induces the zero map in all mod 2 reduced homology groups, then
\begin{equation} \label{eq: BousfieldCurtisFirstEq}
\redPoinSer{\Loop X ; \F_2} = \frac{\redPoinSer{X; \F_2}}{\ind - \redPoinSer{X;\F_2}}
\end{equation}
Here the reduced diagonal map $\redDiag{X}$ is the composite $X \xrightarrow{\Delta_X} X\times X\onto X\redProd X$ of the diagonal map followed by the standard projection to the self-smash product.
In particular, \eqref{eq: BousfieldCurtisFirstEq} holds when $X$ is a simply-connected co-H-space.
This is a generalization of \eqref{eq: redPoinSerOfLoopSus} in the case where $K = \F_2$.

In this article, we compute the mod 2 reduced Poincar\'{e} series
    for a certain loop space which is the underlying space of a simplicial group construction of Carlsson.
This culminates work beginning from Carlsson \cite{Carlsson84} and followed by the first and third authors \cite{Wu97, GaoThesis, GaoWu13}.
Let $\projSp{1}{\R}$ denote the real projective line,
    regarded as a subspace of the infinite real projective space $\projSp{\infty}{\R}$.
    In terms of the standard CW complex structure on $\projSp{\infty}{\R}$,
        the subspace $\projSp{1}{\R}$ is the bottom cell.
Note that both these real projective spaces are Eilenberg-MacLane spaces,
        namely $\projSp{1}{\R}= K(\Z,1)$ and $\projSp{\infty}{\R }=K(\Z/2 , 1)$.
In particular, $\projSp{1}{\R} \simeq S^1$.
\begin{theorem} \label{thm: PoinSer}
Let $A\inc Y$ be a based inclusion of pointed spaces,
    both of whose mod 2 homology groups are finite-dimensional.
If $Y$ is path-connected and
the map $(\redDiag{A})_* : \redH_*(A;\F_2)\to \redH_*(A\redProd A;\F_2)$ in mod 2 reduced homology induced by the reduced diagonal map of $A$ is the zero map,
  then
  \begin{multline} \label{eq: PoinSerFormula}
  \redPoinSer{\Loop((A\redProd \projSp{\infty}{\R}) \union_{A\redProd \projSp{1}{\R}} (Y\redProd \projSp{1}{\R}) ; \F_2)} \\
    = \frac{                (1-\ind)\redPoinSer{Y;\F_2} + \ind \redPoinSer{A;\F_2} }
              {   1 - \ind - (1-\ind)\redPoinSer{Y;\F_2} - \ind\redPoinSer{A;\F_2}  }
  \end{multline}
\end{theorem}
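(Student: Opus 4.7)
The plan is to verify the hypotheses of Proposition~\ref{prop: BousfieldCurtis} for the pushout space
\[
X := (A\redProd \projSp{\infty}{\R}) \union_{A\redProd \projSp{1}{\R}} (Y\redProd \projSp{1}{\R}),
\]
apply that proposition, and algebraically simplify. For brevity write $B := A\redProd \projSp{\infty}{\R}$, $C := Y\redProd \projSp{1}{\R}$, and $D := A\redProd \projSp{1}{\R}$, so that $X = B \union_D C$.

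First I would compute $\redPoinSer{X; \F_2}$ via Mayer--Vietoris. The inclusion $\projSp{1}{\R} \inc \projSp{\infty}{\R}$ is inclusion of the $1$-skeleton and so induces a split injection on mod $2$ reduced homology; by Künneth the induced map $D \inc B$ then also does. Consequently the Mayer--Vietoris connecting homomorphism vanishes, giving
\[
\redPoinSer{X; \F_2} = \redPoinSer{B; \F_2} + \redPoinSer{C; \F_2} - \redPoinSer{D; \F_2},
\]
which, using Künneth and the values $\redPoinSer{\projSp{\infty}{\R}; \F_2} = \ind/(1-\ind)$ and $\redPoinSer{\projSp{1}{\R}; \F_2} = \ind$, becomes an explicit rational expression in $\redPoinSer{A; \F_2}$, $\redPoinSer{Y; \F_2}$, and $\ind$.

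The central step is to show $(\redDiag{X})_{*} = 0$ on mod $2$ reduced homology. My approach begins with the smash--diagonal identity
$\redDiag{W_1 \redProd W_2} = \tau \circ (\redDiag{W_1} \redProd \redDiag{W_2})$,
where $\tau$ swaps the two middle factors of the fourfold smash product. This identity, combined with the Künneth formula, implies that $(\redDiag{W_1 \redProd W_2})_{*}$ vanishes on mod $2$ reduced homology whenever at least one of $(\redDiag{W_1})_{*}$ and $(\redDiag{W_2})_{*}$ does. The hypothesis on $A$ therefore yields $(\redDiag{B})_{*} = (\redDiag{D})_{*} = 0$, while $\redDiag{\projSp{1}{\R}} \colon S^1 \to S^2$ is zero on reduced homology for degree reasons, so $(\redDiag{C})_{*} = 0$. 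From Mayer--Vietoris the inclusion-induced map $\redH_*(B; \F_2) \oplus \redH_*(C; \F_2) \onto \redH_*(X; \F_2)$ is surjective, and naturality of the reduced diagonal, $\redDiag{X} \circ \iota_E = (\iota_E \redProd \iota_E) \circ \redDiag{E}$ for $E \in \{B, C\}$, then forces $(\redDiag{X})_{*} = 0$.

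Finally, $X$ is simply connected: each of $B$, $C$, $D$ is a smash of pointed path-connected spaces and hence $1$-connected, and Van Kampen applied to the pushout gives $\pi_1 X = 0$. Proposition~\ref{prop: BousfieldCurtis} then yields $\redPoinSer{\Loop X; \F_2} = \redPoinSer{X; \F_2}/(\ind - \redPoinSer{X; \F_2})$, and substituting the expression from the first step and cancelling a common factor of $\ind$ from numerator and denominator produces \eqref{eq: PoinSerFormula}. The main obstacle is the vanishing of the reduced diagonal of $X$, but the smash--diagonal swap identity combined with the Mayer--Vietoris surjection keeps this step brief.
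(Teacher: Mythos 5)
Your proposal is correct (at the same level of point-set rigor the paper itself works at), but it takes a genuinely different route from the paper's proof. The paper proves theorem \ref{thm: PoinSer} through the machinery of \cite{GaoWu13}: lemma \ref{lem: unionAsStuntRedBor} identifies $X:=(A\redProd \projSp{\infty}{\R}) \union_{A\redProd \projSp{1}{\R}} (Y\redProd \projSp{1}{\R})$ with a 1-stunted reduced Borel construction, the homology decomposition \eqref{eq: E1 collapse} reduces everything to the spaces $Y^{\redProd s}/\tilDelta_s$, and lemma \ref{lem: incAsHomologousToZero} plus the combinatorial formula \eqref{eq: tilDeltaDecomp} feed a long generating-function computation. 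You instead deduce the theorem from proposition \ref{prop: BousfieldCurtis}: the Mayer--Vietoris splitting (which indeed needs only the injectivity of $\redH_*(A\redProd\projSp{1}{\R})\to\redH_*(A\redProd\projSp{\infty}{\R})$, automatic with $\F_2$-coefficients) gives both $\redPoinSer{X}$ and the surjectivity of $\redH_*(A\redProd\projSp{\infty}{\R})\oplus\redH_*(Y\redProd\projSp{1}{\R})\to\redH_*(X)$, and the smash--diagonal identity plus naturality of the reduced diagonal then force $(\redDiag{X})_*=0$; your algebra at the end does reproduce \eqref{eq: PoinSerFormula}. One small point you should make explicit: $A$ is not assumed path-connected, but this follows in degree zero from $(\redDiag{A})_*=0$ (as observed in the proof of corollary \ref{cor: hardWork} and in example \ref{ex: wuExample}), and it is needed both for the simple connectivity of $A\redProd\projSp{\infty}{\R}$ and for the van Kampen step. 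There is no circularity in invoking proposition \ref{prop: BousfieldCurtis}, since its proof uses only the Bousfield--Curtis spectral sequence and never theorem \ref{thm: PoinSer}. As for what each route buys: yours is much shorter, and as a byproduct it strengthens corollary \ref{cor: hardWork} by removing the hypothesis that $\redH_*(A)\to\redH_*(Y)$ be a monomorphism --- in effect you run the paper's corollary backwards, proving the collapse first and reading off the Poincar\'{e} series. The paper's longer route, by contrast, rests on the algebra-level splitting \eqref{eq: E1 collapse} coming from Carlsson's construction, so it is logically independent of the Bousfield--Curtis machinery; in that setup corollary \ref{cor: hardWork} is genuinely new information obtained by comparing two independent computations, whereas in your argument the collapse criterion is an input rather than an output, and your theorem inherits whatever convergence/collapse caveats attach to proposition \ref{prop: BousfieldCurtis}.
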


For $G$ a discrete group and $X$ a pointed $G$-space,
    Carlsson constructed a simplicial group $J^G[X]$ of the homotopy type of $\Loop \bar{C}(X)$,
        where $\bar{C}(X)$ is the homotopy cofiber of composite $X\inc \bor{X}{G} \onto \redBor{X}{G}$,
            where $\bor{X}{G}$ (resp. $\redBor{X}{G}$) is the Borel construction (resp. reduced Borel construction) \cite{Carlsson84}.
The first and third authors named $\bar{C}(X)$ the {\emph{1-stunted reduced Borel construction}} of $X$ \cite{GaoWu13}.
In the case where $G= C_2$ is the cyclic group of order two,
    they used Carlsson's construction to obtain a homology decomposition (see \eqref{eq: E1 collapse} below).
The space $(A\redProd \projSp{\infty}{\R}) \union_{A\redProd \projSp{1}{\R}} (Y\redProd \projSp{1}{\R})$ in \eqref{eq: PoinSerFormula}, which is the union
    of the smash products $Y\redProd \projSp{1}{\R}$ and $A\redProd\projSp{\infty}{\R}$
            identified over their common subspace $A\redProd\projSp{1}{\R}$,
is the 1-stunted reduced Borel construction of the pointed $C_2$-space $Y\union_A Y$
    with the $C_2$-action associated to the based involution switching the two copies of $Y$ (see lemma \ref{lem: unionAsStuntRedBor} below).

Theorem \ref{thm: PoinSer} generalizes \eqref{eq: redPoinSerOfLoopSus} in the case where $K= \F_2$ by taking $A$ to be the basepoint of $Y$ (see example \ref{ex: loopSus} below).
Further examples are given in section \ref{sec: examples}.
We highlight example \ref{ex: LangExample} which has some relation to combinatorics.
Taking $A = S^1$ and $Y = S^2$ in Theorem \ref{thm: PoinSer}
 gives a loop space whose mod 2 reduced Betti numbers form essentially sequence A052547 in the On-Line Encyclopedia of Integer Sequences \cite{OEIS}.
This sequence has a geometric interpretation in terms of diagonals lengths in the regular
heptagon with unit side length \cite{Steinbach97,Lang12}. These diagonal lengths are related
to the Chebyshev polynomials used in approximation theory.

This article is organized as follows.
In section \ref{sec: BackgroundMaterial}, we cover background material taken mainly from \cite{GaoWu13}.
In section \ref{sec: ProofOfPoinSerFormula}, we prove theorem \ref{thm: PoinSer}.
In section \ref{sec: examples}, we give some examples of theorem \ref{thm: PoinSer} and present a corollary related to the above-mentioned spectral sequence of Bousfield-Curtis.

\section{Background material} \label{sec: BackgroundMaterial}

In this section, we cover background material taken mainly from \cite{GaoWu13}.
The results will be stated without proof, but the relevant reference will be indicated.
Throughout the rest of this article, homology will always be taken modulo 2.
As such, we omit the base field $\F_2$ from the notation.

First we introduce the 1-stunted reduced Borel construction associated to a pointed $G$-space, where $G$ is a discrete group.
    A {\emph{(right) $G$-space}} is a space $X$ equipped with a map $\mu: X\times G\to X$
        such that, for $x\in X$ and $g,h\in G$, we have the identities $\mu(x,1) = x$ and $\mu(\mu(x,g),h)=\mu(x,gh)$.
    Typically, we write $x\cdot g$ or just $xg$ instead of $\mu(x,g)$.
    For $X$ a $G$-space,
    let $\inv{X}{G}$ denote its $G$-invariant subspace
    and let $\orbSp{X}{G}$ denote its orbit space.

    The terminal $G$-space is the one-point space, denoted by $\pt$, equipped with the trivial $G$-action.
    Hence, a {\emph{pointed $G$-space}}
            is a $G$-space $X$
                equipped with a $G$-equivariant map from $\pt$ to $X$.
    Equivalently,
        a pointed $G$-space
                is a $G$-space equipped with a basepoint invariant under the $G$-action.
    For example, for $Z$ a pointed space, the quotient space
    \begin{equation} \label{eq: RedProdSpace}
    Z\rtimes G := (Z\times G)/(\pt\times G),
    \end{equation}
    equipped with the action induced by the free $G$-action on $Z\times G$, is a pointed $G$-space.

Let $EG$ denote the contractible $G$-space with a free $G$-action.
    The {\emph{Borel construction}} of a $G$-space $X$,
        denoted by $\bor{X}{G}$,
            is the orbit space of the diagonal $G$-action on the product $X\times EG$.
    The  {\emph{1-stunted Borel construction}} of a $G$-space $X$,
        denoted by $\stuntBor{X}{G}$,
             is the homotopy cofiber of the inclusion of a fiber $X\inc \bor{X}{G}$ into its Borel construction.
    The {\emph{reduced Borel construction}} of a pointed $G$-space $X$,
        denoted by $\redBor{X}{G}$,
            is the homotopy cofiber of the map $\bor{\pt}{G}\to \bor{X}{G}$
                induced by the inclusion $\pt\inc X$.
    Let $i:\bor{X}{G}\to \redBor{X}{G}$ be the natural map.
    The {\emph{1-stunted reduced Borel construction}} of a pointed $G$-space $X$
        denoted by $\stuntRedBor{X}{G}$,
            is the homotopy cofiber of the composite map $X\inc\bor{X}{G} \xrightarrow{i} \redBor{X}{G}$.

These variants of the Borel construction are related by the following $3\times 3$ homotopy commutative diagram.
\begin{diagram}
\pt    & \rTo & \bor{\pt}{G}       & \rTo &\stuntBor{\pt}{G} \\
\dTo &         & \dTo                &        & \dTo \\
X     & \rTo   & \bor{X}{G}       & \rTo  & \stuntBor{X}{G} \\
\dTo &          & \dTo               &           & \dTo \\
X     & \rTo    & \redBor{X}{G} & \rTo & \stuntRedBor{X}{G}
\end{diagram}
    By definition, all three rows and the middle column are cofiber sequences.
Since the first column is obviously a cofiber sequence,
    we conclude that the third column is a cofiber sequence.
In other words, the 1-stunted reduced Borel construction $\stuntRedBor{X}{G}$
    is also the homotopy cofiber of the map $\stuntBor{\pt}{G}\to \stuntBor{X}{G}$ between 1-stunted Borel constructions induced by the inclusion $\pt\inc X$.

Let us consider two examples of the 1-stunted reduced Borel construction, which we leave to the reader to verify.
If $G$ acts trivially on a pointed space $X$, then
\begin{equation} \label{eq: stuntRedBorOfTrivialAction}
\stuntRedBor{X}{G} \simeq X \redProd BG
\end{equation}
Here $BG=K(G,1)$ is the classifying space of the discrete group $G$.
For another example, consider the pointed $G$-space $ Z\rtimes G$ described in \eqref{eq: RedProdSpace} above.
Then there is a homotopy equivalence natural in $Z$:
\begin{equation} \label{eq: stuntRedBorOfSemifreeAction}
\stuntRedBor{(Z\rtimes G)}{G} \simeq Z \redProd \sus G
\end{equation}

Now consider the case where $G= C_2$, the cyclic group of order two.
In this case, a pointed $C_2$-space can be described equivalently as a pointed space equipped with a based involution.
To see this, let $t$ be the generator of $C_2$.
    Given a pointed $C_2$-space $X$, the map $(x\mapsto xt): X\to X$ is a based involution of $X$.
    Conversely, given $j:X\to X$ is a based involution of $X$ (so that $j\circ j = {\mathrm{id}}_X$),
                            then $X$ is a pointed $C_2$-space with the action $X\times C_2 \to X$ given by $x\cdot 1 = x$ and $x\cdot t = j(x)$.

Let $X$ be a $G$-space.
The {\textit{orbit projection}} is the projection $X\to X/G$ which sends each $x\in X$ to its orbit $xG$.
For $f:Y\to Z$ a map, a {\emph{section}} of $f$ is a map $g: Z\to Y$ such that the
composite $Z\xrightarrow{g} Y\xrightarrow{f} Z$ is the identity map of $Z$.
The following homology decomposition is theorem 1.1 of  \cite{GaoWu13} (recall that homology is taken modulo 2):
For $X$ be a pointed $C_2$-space,
if the orbit projection has a section, then there is an isomorphism of $\F_2$-algebras:
\begin{equation} \label{eq: E1 collapse}
\redH_*(\Loop(\stuntRedBor{X}{C_2}))\isom \DirSum_{s\ge 1} \redH_*\left((X/C_2)^{\redProd
s}/\tilDelta_s\right),
\end{equation}
where, for $s\ge 1$,
\[
\tilDelta_s:=\{x_1C_2\redProd \cdots \redProd x_s C_2\in (X/C_2)^{\redProd s}|\, \exists i=1,\ldots, s-1\,
(x_i=x_{i+1}\in \inv{X}{C_2})\}
\]

The sufficient condition for \eqref{eq: E1 collapse} to hold is that $X$ is a pointed $C_2$-space whose orbit projection has a section.
Proposition 4.1 of \cite{GaoWu13} characterizes $C_2$-spaces whose orbit projection has a section:
Let $X$ be a $C_2$-space.
The orbit projection $X\to X/C_2$ has a section if and only if, there exist spaces $A$ and
$Y$ where $A$ is a subspace of $Y$, such that
\begin{equation} \label{eq: chraOfOrbitProjSecExist}
 X\isom Y\union_A Y
 \end{equation}
 with the $C_2$-action
    corresponding to the involution which switches the two copies of $Y$.
For the $C_2$-space $Y\union_A Y$, its orbit space is isomorphic to $Y$ and its $C_2$-invariant subspace is $A$.
There are exactly two sections of the orbit projection.
One section maps the orbit space to the left copy of $Y$, while the other section maps the orbit space to right copy of $Y$.

In the case of a pointed $C_2$-space $X$,
    its basepoint is invariant under the action, hence its orbit projection $X\to X/C_2$ has a section if and only if \eqref{eq: chraOfOrbitProjSecExist} holds where $A$ is a based subspace of $Y$.

Next, we need a combinatorial formula for the Betti numbers of the spaces $\tilDelta_s$.
This will require some notation.
In this article, by a {\emph{multiindex}}, we mean a (possibly empty) finite sequence of positive integers.
        For example, $(2,5,4)$ is a multiindex.
For a multiindex $\alpha = (\alpha_1,\ldots, \alpha_d)$,
     its {\emph{dimension}},
    denoted by $\dim{\alpha}$,
        is just the nonnegative integer $d$,
while its {\emph{length}} $\length{\alpha}$
        is the sum $\alpha_1+\cdots + \alpha_d$.
For $W$ a pointed space whose mod 2 homology groups are finite-dimensional,
    its {\emph{$\alpha$th (mod 2) reduced Betti number}} $\redBetti{W}{\alpha}$
            is the product $\redBetti{W}{\alpha_1} \redBetti{W}{\alpha_2}\cdots \redBetti{W}{\alpha_d}$.
In particular, $\redBetti{W}{\emptyset} = 1$ where $\emptyset$ is the empty sequence.

For a sentence $\tau$, its {\emph{Iverson bracket}} is
\begin{equation} \label{eq: IversonBracketNotation}
\ivBrack{\tau} = \begin{cases}
1 & {\text{if $\tau$ is true}} \\
0 & {\text{if $\tau$ is false}}
\end{cases}
\end{equation}
For $n$ and $k$ integers,
    the {\emph{binomial coefficient}} is given by
        \begin{equation} \label{eq: binomCoeffDef}
        \binom{n}{k} := \ivBrack{k\ge 0} \frac{n(n-1)\cdots (n-k+1)}{k!}
        \end{equation}
    In particular, when $k=0$, the product in the numerator is empty, so that $\binom{n}{0} = 1$.

Using this notation, we can state theorem 1.2 from \cite{GaoWu13}:
Let $s\ge 1$.
If the map $(\redDiag{\inv{X}{C_2}})_* : \redH_*(\inv{X}{C_2})\to \redH_*(\inv{X}{C_2}\redProd \inv{X}{C_2})$
    induced by the reduced diagonal map of $\inv{X}{C_2}$ is the zero map,
then
\begin{equation} \label{eq: tilDeltaDecomp}
\redBetti{\tilDelta_s}{q}
    = \sum_{\substack{|\lambda|+|\mu| = q -s+\dim\lambda+\dim\mu + 1 \\
2\le\dim\lambda+\dim\mu+1\le s}} c_{\lambda,\mu}^{(s)}\redBetti{\orbSp{X}{C_2}}{\lambda} \redBetti{\inv{X}{C_2}}{\mu}
\end{equation}
where
\[
c_{\lambda,\mu}^{(s)} :=\binom{\dim \lambda +\dim\mu}{\dim\mu}\binom{s-\dim \lambda-\dim\mu-1}{\dim\mu-1}.
\]
Here $\lambda$ and $\mu$ in the sum in \eqref{eq: tilDeltaDecomp} are multiindexes.

Finally, iterating corollary 5.7 of  \cite{GaoWu13} yields the following lemma.
\begin{lemma} \label{lem: incAsHomologousToZero}
Let $s\ge 1$. If the map $(\redDiag{\inv{X}{C_2}})_* : \redH_*(\inv{X}{C_2})\to \redH_*(\inv{X}{C_2}\redProd \inv{X}{C_2})$
    induced by the reduced diagonal map of $\inv{X}{C_2}$ is the zero map,
    then the map $\redH_*(\tilDelta_s)\to \redH_*((X/C_2)^{\redProd s})$ induced by the inclusion $\tilDelta_s \inc (X/C_2)^{\redProd s}$ is also the zero map.
\end{lemma}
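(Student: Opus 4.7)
The plan is to proceed by induction on $s$. For $s = 1$ the subspace $\tilDelta_1$ has vanishing reduced homology since its defining existential condition is vacuous, so there is nothing to prove. For $s = 2$, the inclusion $\tilDelta_2 \inc (\orbSp{X}{C_2})^{\redProd 2}$ fits as the second factor of the composite
\[
\inv{X}{C_2} \xrightarrow{\redDiag{\inv{X}{C_2}}} \inv{X}{C_2} \redProd \inv{X}{C_2} \longrightarrow (\orbSp{X}{C_2})^{\redProd 2},
\]
where the second arrow is the smash product of the orbit projections (which restricts to a homeomorphism onto $\tilDelta_2$ along the diagonal of fixed points, since distinct fixed points have distinct orbits). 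The first arrow vanishes in reduced homology by hypothesis, so the $s = 2$ inclusion vanishes in reduced homology.

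For $s \geq 3$, decompose $\tilDelta_s = B_1 \cup \cdots \cup B_{s-1}$, where $B_i \subseteq (\orbSp{X}{C_2})^{\redProd s}$ is the subspace on which $x_i = x_{i+1} \in \inv{X}{C_2}$. Each $B_i$ is homeomorphic to $(\orbSp{X}{C_2})^{\redProd(i-1)} \redProd \inv{X}{C_2} \redProd (\orbSp{X}{C_2})^{\redProd(s-i-1)}$, and under this identification its inclusion into $(\orbSp{X}{C_2})^{\redProd s}$ is the smash product of the identities on the outer factors with the $s = 2$ composite in the middle factor. By the K\"{u}nneth formula over $\F_2$ and the hypothesis, each $B_i \inc (\orbSp{X}{C_2})^{\redProd s}$ is therefore zero in reduced homology. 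To combine these into the full union, iterate corollary 5.7 of \cite{GaoWu13}: set $U_1 = B_1$ and $U_{k+1} = U_k \cup B_{k+1}$, and at each stage deduce the vanishing of $U_{k+1}$'s inclusion from the vanishing of $U_k$'s inclusion (by induction on $k$) and of $B_{k+1}$'s inclusion (established above) via Mayer--Vietoris. The process terminates at $U_{s-1} = \tilDelta_s$.

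The main obstacle is the behaviour of the intersection $U_k \cap B_{k+1} = \bigcup_{j=1}^k (B_j \cap B_{k+1})$ that enters the Mayer--Vietoris sequence. Each piece $B_j \cap B_{k+1}$ imposes two equal-fixed-point pair conditions (disjoint when $j < k$, overlapping at position $k+1$ when $j = k$), and its inclusion into $(\orbSp{X}{C_2})^{\redProd s}$ therefore factors through two applications of $\redDiag{\inv{X}{C_2}}$; hence it is again zero in reduced homology by the hypothesis. Taking the union over $j$ is handled by a nested application of the same iteration scheme. The combinatorial bookkeeping required to keep these nested Mayer--Vietoris arguments consistent is precisely what corollary 5.7 of \cite{GaoWu13} is designed to encapsulate, so its iterated application yields the lemma.
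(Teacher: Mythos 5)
Your treatment of the individual pieces is fine: each $B_i\isom (\orbSp{X}{C_2})^{\redProd(i-1)}\redProd \inv{X}{C_2}\redProd(\orbSp{X}{C_2})^{\redProd(s-i-1)}$ includes into $(\orbSp{X}{C_2})^{\redProd s}$ through a smash factor of the form $(\iota\redProd\iota)\circ\redDiag{\inv{X}{C_2}}$, so by the K\"unneth theorem over $\F_2$ its inclusion induces zero in reduced homology. The genuine gap is the step that assembles the pieces. The inference ``the inclusions of $U_k$, of $B_{k+1}$ and of $U_k\intersect B_{k+1}$ into the ambient space all induce zero in reduced homology, hence so does the inclusion of $U_k\union B_{k+1}$'' is false in general: write $S^1$ as the union of its two closed arcs, or $S^2$ as the union of its two closed hemispheres (in the latter example even the intersection maps into each piece by zero in reduced homology), and all of these hypotheses hold while the union includes by the identity. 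What Mayer--Vietoris cannot control is the connecting homomorphism: $\redH_*(U_{k+1})$ contains classes that are not in the image of $\redH_*(U_k)\oplus\redH_*(B_{k+1})$, and nothing in your argument constrains the image of such classes in $\redH_*((\orbSp{X}{C_2})^{\redProd s})$. (Vanishing of the inclusion of the intersection into the \emph{ambient} space, which is what you verify, is not even the quantity that enters the Mayer--Vietoris sequence.)

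This failure is not hypothetical in the present situation. Already for $s=3$, the intersection $B_1\intersect B_2$ is the small diagonal copy of $\inv{X}{C_2}$, and its maps to $\redH_*(B_1)$ and $\redH_*(B_2)$ themselves factor through $(\redDiag{\inv{X}{C_2}})_*=0$; hence the Mayer--Vietoris boundary is surjective and $\redH_n(\tilDelta_3)$ contains a summand isomorphic to $\redH_{n-1}(\inv{X}{C_2})$ beyond the image of the two pieces. The Betti-number formula \eqref{eq: tilDeltaDecomp} records exactly such extra classes for every $s$, so the classes your argument leaves untouched really are there. Proving that these connecting classes also die in $(\orbSp{X}{C_2})^{\redProd s}$ is the actual content of the lemma, and your proposal delegates it wholesale to the assertion that corollary 5.7 of \cite{GaoWu13} ``encapsulates the bookkeeping,'' without using what that corollary says. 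Whatever its precise statement, it cannot be the purely formal pieces-plus-intersections Mayer--Vietoris principle your iteration needs, since that principle is false; so the appeal does not close the gap. (For comparison, the paper offers no details either: it proves the lemma exactly by iterating that corollary of \cite{GaoWu13}, which is where the substantive input about these diagonal subspaces lives.)
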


\section{Proof of theorem \ref{thm: PoinSer}} \label{sec: ProofOfPoinSerFormula}

In this section, we prove theorem \ref{thm: PoinSer}.

First, we compute the homotopy type of the 1-stunted reduced Borel construction of pointed $C_2$-spaces whose orbit projection has a section.
Recall from the remarks after \eqref{eq: chraOfOrbitProjSecExist} that a pointed $C_2$-space whose orbit projection has a section has the form $Y\union_A Y$ where $A$ is a based subspace of $Y$.

\begin{lemma} \label{lem: unionAsStuntRedBor}
For $A\inc Y$ a based inclusion of pointed spaces,
    \[\stuntRedBor{(Y\union_A Y)}{C_2}
                                                    \simeq (A\redProd \projSp{\infty}{\R}) \union_{A\redProd \projSp{1}{\R}} (Y\redProd \projSp{1}{\R})\]
\end{lemma}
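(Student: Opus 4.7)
The plan is to realize $Y\union_A Y$ as a pushout of pointed $C_2$-spaces whose 1-stunted reduced Borel constructions are already computed by equations \eqref{eq: stuntRedBorOfTrivialAction} and \eqref{eq: stuntRedBorOfSemifreeAction}, and then to transport this presentation across the functor $\stuntRedBor{-}{C_2}$. Concretely, I would work with the square of pointed $C_2$-spaces
\begin{diagram}
A\rtimes C_2 & \rTo & Y\rtimes C_2 \\
\dTo             &         & \dTo \\
A                 & \rTo  & Y\union_A Y
\end{diagram}
in which $A$ in the bottom-left carries the trivial action, $A\rtimes C_2$ and $Y\rtimes C_2$ carry the free-away-from-basepoint action of \eqref{eq: RedProdSpace}, the top arrow is $(A\inc Y)\rtimes C_2$, and the left vertical arrow is the fold. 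A direct check shows this is a pushout: $Y\rtimes C_2$ is two copies of $Y$ wedged at the basepoint, and imposing the identification of the two copies of $A$ inside (via the fold) produces $Y\union_A Y$ with the involution that swaps the two $Y$'s. Since the top arrow is a cofibration, this is a homotopy pushout as well.

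Next, I would apply $\stuntRedBor{-}{C_2}$ to this square. This functor preserves homotopy pushouts: it is the homotopy cofiber of the natural transformation $X\to\redBor{X}{C_2}$, and both the identity and the reduced Borel construction commute with homotopy pushouts. Equations \eqref{eq: stuntRedBorOfTrivialAction} and \eqref{eq: stuntRedBorOfSemifreeAction} then identify the three simple corners of the transported square as $A\redProd \projSp{1}{\R}$, $Y\redProd\projSp{1}{\R}$, and $A\redProd\projSp{\infty}{\R}$, respectively. Naturality of \eqref{eq: stuntRedBorOfSemifreeAction} in its argument identifies the top arrow as $(A\inc Y)\redProd\id{\projSp{1}{\R}}$. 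Reading off the pushout yields the right-hand side of the lemma, provided the left vertical arrow is identified as $\id{A}\redProd(\projSp{1}{\R}\inc \projSp{\infty}{\R})$.

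The main obstacle is precisely that last identification. I would address it via the naturality statement that for a trivial pointed $C_2$-space $B$ and any pointed $C_2$-space $Z$, there is a homotopy equivalence $\stuntRedBor{B\redProd Z}{C_2}\simeq B\redProd \stuntRedBor{Z}{C_2}$, natural in both variables, arising from the fact that $B$ may be extracted through both the $C_2$-quotient appearing in $\redBor{-}{C_2}$ and the cofiber producing the 1-stunting. Recognizing $A\rtimes C_2\isom A\redProd (C_2)_+$ and rewriting the fold as $\id{A}\redProd \epsilon$, where $\epsilon:(C_2)_+\to S^0$ is the augmentation, reduces the problem to the universal case $A=S^0$: showing that $\stuntRedBor{\epsilon}{C_2}:\projSp{1}{\R}\to\projSp{\infty}{\R}$ is homotopic to the bottom-cell inclusion. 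This can be checked directly from the explicit models underlying \eqref{eq: stuntRedBorOfTrivialAction} and \eqref{eq: stuntRedBorOfSemifreeAction}, or coarsely by observing that the map is nontrivial and that $[\projSp{1}{\R},\projSp{\infty}{\R}]\isom \Z/2$ has a unique nontrivial homotopy class.
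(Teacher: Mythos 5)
Your proposal is correct and takes essentially the same route as the paper: realize $Y\union_A Y$ as the (homotopy) pushout of $A \leftarrow A\rtimes C_2 \inc Y\rtimes C_2$, note that the 1-stunted reduced Borel construction commutes with homotopy pushouts because it is built from Borel constructions and homotopy cofibers, and identify the corners via \eqref{eq: stuntRedBorOfTrivialAction} and \eqref{eq: stuntRedBorOfSemifreeAction}. The only difference is that you spell out why the resulting map is the bottom-cell inclusion $\projSp{1}{\R}\inc\projSp{\infty}{\R}$ (naturality, reduction to $A=S^0$, and $[\projSp{1}{\R},\projSp{\infty}{\R}]\isom\Z/2$), a point the paper simply asserts.
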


\begin{proof}
The following is a pushout square of pointed $C_2$-spaces:
\begin{diagram}
    A \rtimes C_2 & \rTo^{\proj_1} & A\\
    \dInc            &        & \dTo \\
    Y \rtimes C_2   & \rTo & Y\union_A Y
    \end{diagram}

Since the Borel construction commutes with equivariant homotopy colimits
    and the homotopy cofiber commutes with homotopy colimits,
the 1-stunted reduced Borel construction commutes with homotopy colimits of pointed $C_2$-spaces.

Thus,
        taking the 1-stunted reduced Borel construction and using equations \eqref{eq: stuntRedBorOfTrivialAction} and \eqref{eq: stuntRedBorOfSemifreeAction},
         we have a homotopy pushout square
\begin{diagram}
    A\redProd \sus C_2  & \rTo^{A\redProd j} & A\redProd B C_2\\
    \dInc                        &        & \dTo \\
    Y\redProd \sus C_2   & \rTo & \stuntRedBor{(Y\union_A Y)}{C_2}
\end{diagram}
where the map $j: \sus C_2\to BC_2$ can be identified, up to homotopy, with the inclusion $\projSp{1}{\R} \inc  \projSp{\infty}{\R}$.
\end{proof}

Recall from the remarks after \eqref{eq: chraOfOrbitProjSecExist} that,
for the pointed $C_2$-space $Y\union_A Y$,
     its orbit projection has a section, its orbit space is $Y$ and its $C_2$-invariant subspace is $A$.
Hence, lemma \ref{lem: unionAsStuntRedBor} together with the homology decomposition \eqref{eq: E1 collapse} gives
\begin{equation} \label{eq: InitialDecomp}
\redPoinSer{\Omega(A\redProd \projSp{\infty}{\R}) \union_{A\redProd \projSp{1}{\R}} (Y\redProd \projSp{1}{\R})} = \sum_{q\ge 0} \ind^q \sum_{s\ge 1} \redBetti{Y^{\redProd s}/\tilDelta_s}{q},
\end{equation}
where we identify $\tilDelta_s$ with the following subspace of $Y^{\redProd s}$:
\[
\{x_1\redProd \cdots \redProd x_s \in Y^{\redProd s}|\, \exists i=1,\ldots, s-1\,
(x_i=x_{i+1}\in A\}.
\]

To prove theorem \ref{thm: PoinSer}, we will require an ordinary generating function of the binomial coefficients (see \cite{Wilf94} p.120 equation (4.3.1)).
Let $k$ be a nonnegative integer. Then
\[
\sum_{n\ge 0 } \binom{n}{k} \ind^n = \frac{\ind^k}{(1-\ind)^{k+1}}
\]
More generally, given an integer $m$ satisfying $0\le m\le k$, we also have
\begin{equation} \label{eq: StanGenFuncOfBinomCoeff}
\sum_{n\ge m } \binom{n}{k} \ind^n = \frac{\ind^k}{(1-\ind)^{k+1}}
\end{equation}
This is because the definition \eqref{eq: binomCoeffDef} of the binomial coefficients implies that $\binom{0}{k} = \binom{1}{k} = \cdots = \binom{k-1}{k} = 0$.
Note that, by a change of variables, \eqref{eq: StanGenFuncOfBinomCoeff}
    hold yet more generally when the indeterminate $\ind$
        is replaced by a formal power series whose constant term is zero.

\begin{proof}[Proof of  theorem \ref{thm: PoinSer}]
Let $A\inc Y$ be a based inclusion of pointed spaces,
    both of whose mod 2 homology groups are finite-dimensional.
Suppose that $Y$ is path-connected and
    that the map $(\redDiag{A})_* : \redH_*(A)\to \redH_*(A\redProd A)$ in reduced homology induced by the reduced diagonal map of $A$ is the zero map.
Hence, by lemma \ref{lem: incAsHomologousToZero}, for each $s\ge 1$, the map
    $\redH_*(\tilDelta_s)\to \redH_*(Y^{\redProd s})$ induced by the inclusion $\tilDelta_s \inc Y^{\redProd s}$ is also the zero map.
This implies that the long exact sequence in homology associated to the cofiber sequence $\tilDelta_s\inc Y^{\redProd s}\to Y^{\redProd s}/\tilDelta_s$ splits into short exact sequences
    $
    0\to \redH_q(Y^{\redProd s})\to \redH_q(Y^{\redProd s} / \tilDelta_s) \to \redH_{q-1}(\tilDelta_s) \to 0
    $.
As we are taking homology with coefficients in a field $\F_2$, these exact sequences split.
Hence $\redBetti{Y^{\redProd s} / \tilDelta_s}{q}
       = \redBetti{Y^{\redProd s}}{q} + \redBetti{\tilDelta_s}{q-1}$.
Thus \eqref{eq: InitialDecomp} becomes
\begin{equation} \label{pf: CorPoinSerFormula Eq2}
\begin{aligned}[b]
& \redPoinSer{\Omega(A\redProd \projSp{\infty}{\R}) \union_{A\redProd \projSp{1}{\R}} (Y\redProd \projSp{1}{\R})} \\
 = {}&\sum_{q\ge 0}  \ind^q\sum_{s\ge 1}  \redBetti{Y^{\redProd s}}{q}
         + \sum_{q\ge 0} \ind^{q}\sum_{s\ge 1} \redBetti{\tilDelta_s}{q-1}
\end{aligned}
\end{equation}

Here the first sum in \eqref{pf: CorPoinSerFormula Eq2} is just
\begin{equation} \label{pf: CorPoinSerFormula Eq3}
\begin{aligned}[b]
 \sum_{q\ge 0} \ind^q \sum_{s\ge 1}  \redBetti{Y^{\redProd s}}{q}
 &= \sum_{s\ge 1} \redPoinSer{Y^{\redProd s}} \\
 &= \sum_{s\ge 1} \redPoinSer{Y}^s \\
 &= \frac{\redPoinSer{Y}}{1 - \redPoinSer{Y}}.
  \end{aligned}
\end{equation}
Notice that the geometric series formula is applicable in the last line above.
This is because $Y$ is path-connected so that the constant term of the formal power series $\redPoinSer{Y}$, namely $\redBetti{Y}{0}$, equals zero.

We are left to compute the sum
\begin{equation} \label{eq: defOfS}
\mathcal{S} := \sum_{q\ge 0} \ind^{q}\sum_{s\ge 1} \redBetti{\tilDelta_s}{q-1}
\end{equation}
Replacing $q$ by $q+1$ and noting that $\redBetti{\tilDelta_s}{-1} = 0$, this becomes
\[
\mathcal{S}
= \sum_{q\ge -1} \ind^{q+1}\sum_{s\ge 1} \redBetti{\tilDelta_s}{q}
= \sum_{q\ge 0} \ind^{q+1}\sum_{s\ge 1} \redBetti{\tilDelta_s}{q}
\]
Since $(\redDiag{A})_* : \redH_*(A)\to \redH_*(A\redProd A)$ is the zero map, we may use \eqref{eq: tilDeltaDecomp} to obtain
(noting that $\orbSp{(Y\union_A Y)}{C_2} = Y$ and $\inv{(Y\union_A Y)}{C_2} = A$)
\begin{align*}
\mathcal{S} &= \sum_{q\ge 0} \ind^{q+1}\sum_{s\ge 1}
     \sum_{\substack{|\lambda|+|\mu| = q -s+\dim\lambda+\dim\mu + 1 \\
2\le\dim\lambda+\dim\mu+1\le s}} c_{\lambda,\mu}^{(s)}  \redBetti{Y}{\lambda} \redBetti{A}{\mu} \\
&= \ind \sum_{q\ge 0} \ind^{q}
     \sum_{\substack{|\lambda|+|\mu| = q -s+\dim\lambda+\dim\mu + 1 \\
2\le\dim\lambda+\dim\mu+1\le s}} \redBetti{Y}{\lambda} \redBetti{A}{\mu} \sum_{s\ge 1} c_{\lambda,\mu}^{(s)},
\end{align*}
where
\begin{equation} \label{eq: formulaForCLambdaMu}
c_{\lambda,\mu}^{(s)} = \binom{\dim \lambda +\dim\mu}{\dim\mu}\binom{s-\dim \lambda-\dim\mu-1}{\dim\mu-1}.
\end{equation}
 Solving the equation $\length{\lambda} + \length{\mu} = q - s +\dim{ \lambda} + \dim{\mu} +1$ for the variable $s$
 and using \eqref{eq: formulaForCLambdaMu}, this becomes
\begin{align*}
\mathcal{S}  &= \ind\sum_{q\ge 0} \ind^{q}
\sum_{\substack{2\le\dim\lambda+\dim\mu+1 \\
\dim\lambda+\dim\mu+1\le q+ \dim{\lambda} + \dim{\mu} + 1 - \length{\lambda} - \length{\mu}}}
\redBetti{Y}{\lambda} \redBetti{A}{\mu}  c_{\lambda,\mu}^{(q+ \dim{\lambda} + \dim{\mu} + 1 - \length{\lambda} - \length{\mu})} \\
&=  \ind \sum_{q\ge 0} \ind^{q}
     \sum_{\substack{1\le\dim\lambda+\dim\mu \\
\length{\lambda} +\length{\mu}\le q}} \redBetti{Y}{\lambda} \redBetti{A}{\mu} \binom{\dim \lambda +\dim\mu}{\dim\mu}\binom{q-\length{\lambda} -\length{\mu}}{\dim\mu-1}
\end{align*}

Let $\multInd$ denote the set of multiindexes.
In terms of the Iverson bracket notation \eqref{eq: IversonBracketNotation},
\begin{align*}
\mathcal{S}  &=  \ind \sum_{q\in \Z} \ivBrack{q\ge 0}\ind^{q}
     \sum_{\mu\in\multInd}\sum_{\lambda\in\multInd} \ivBrack{1\le\dim\lambda+\dim\mu} \ivBrack{
\length{\lambda} +\length{\mu}\le q}  \\
& \quad\qquad\cdot\redBetti{Y}{\lambda} \redBetti{A}{\mu}\binom{\dim \lambda +\dim\mu}{\dim\mu}\binom{q-\length{\lambda} -\length{\mu}}{\dim\mu-1} \\
&= \ind \sum_{\mu\in\multInd} \redBetti{A}{\mu}
     \sum_{\lambda\in\multInd}\redBetti{Y}{\lambda} \ivBrack{1\le\dim\lambda+\dim\mu} \binom{\dim \lambda +\dim\mu}{\dim\mu}  \\
&\quad\qquad\cdot\sum_{q\in \Z}\ivBrack{q\ge 0} \ivBrack{
\length{\lambda} +\length{\mu}\le q} \ind^{q}\binom{q-\length{\lambda} -\length{\mu}}{\dim\mu-1} \\
\end{align*}

We compute the innermost sum. The empty multiindex $\emptyset$ has dimension 0. Hence, for multiindexes $\mu$ and $\lambda$,
\begin{align*}
&\sum_{q\in\Z} \ivBrack{q\ge 0}\ivBrack{\length{\lambda} +\length{\mu}\le q}\ind^{q} \binom{q-\length{\lambda} -\length{\mu}}{\dim\mu-1} \\
= {} &\ivBrack{\mu\neq \emptyset}\sum_{q\in\Z} \ivBrack{q\ge 0}\ivBrack{\length{\lambda} +\length{\mu}\le q}\ind^{q} \binom{q-\length{\lambda} -\length{\mu}}{\dim\mu-1} \\
= {}&\ivBrack{\mu\neq \emptyset}\sum_{q\in\Z} \ivBrack{\length{\lambda} +\length{\mu}\le q}\ind^{q}  \binom{q-\length{\lambda} -\length{\mu}}{\dim\mu-1} \\
= {}    & \ivBrack{\mu\neq \emptyset}\ind^{\length{\lambda}+\length{\mu}}\sum_{n\in \Z}  \ind^{n}   \ivBrack{0\le n} \binom{n}{\dim{\mu} - 1} \quad \text{(let $n = q-\length{\lambda} -\length{\mu}$)}\\
= {}   & \ivBrack{\mu\neq \emptyset}\frac{\ind^{\length{\lambda} +\length{\mu} + \dim{\mu} - 1}}{(1-\ind)^{\dim{\mu}}}
\end{align*}
where the last equality follows from the ordinary generating function \eqref{eq: StanGenFuncOfBinomCoeff}.
Hence
\begin{align*}
\mathcal{S}  &=  \ind \sum_{\mu\in\multInd} \redBetti{A}{\mu}
     \sum_{\lambda\in\multInd}\redBetti{Y}{\lambda} \ivBrack{1\le\dim\lambda+\dim\mu} \binom{\dim \lambda +\dim\mu}{\dim\mu} \\
     &\qquad\qquad\qquad\qquad\qquad\qquad\qquad\qquad \cdot \ivBrack{\mu\neq \emptyset}\frac{\ind^{\length{\lambda} +\length{\mu} + \dim{\mu} - 1}}{(1-\ind)^{\dim{\mu}}} \\
&=  \ind \sum_{\mu\in\multInd} \redBetti{A}{\mu}
     \sum_{\lambda\in\multInd}\redBetti{Y}{\lambda}  \binom{\dim \lambda +\dim\mu}{\dim\mu} \ivBrack{\mu\neq \emptyset}\frac{\ind^{\length{\lambda} +\length{\mu} + \dim{\mu} - 1}}{(1-\ind)^{\dim{\mu}}} \\
&=   \sum_{\mu\in\multInd} \redBetti{A}{\mu} \ivBrack{\mu\neq \emptyset} \frac{\ind^{\length{\mu} + \dim{\mu} }}{(1-\ind)^{\dim{\mu}}}
     \sum_{\lambda\in\multInd}\redBetti{Y}{\lambda} \binom{\dim \lambda +\dim\mu}{\dim\mu} \ind^{\length{\lambda}}
\end{align*}

Turning to the next sum, we have
\begin{align*}
&\sum_{\lambda\in\multInd}\redBetti{Y}{\lambda}  \binom{\dim \lambda +\dim\mu}{\dim\mu}
\ind^{\length{\lambda}} \\
= {}&  \sum_{d\in\Z} \ivBrack{d\ge 0}  \binom{d + \dim{\mu} }{\dim{ \mu}}
     \prod_{i=1}^d \sum_{\lambda_i \ge 0} \redBetti{Y}{\lambda_i} \ind^{\lambda_i}       \\
= {}& \sum_{d\in\Z} \ivBrack{d\ge 0} \binom{d + \dim{\mu} }{\dim{ \mu}}
     \redPoinSer{Y}^d \\
= {}& \redPoinSer{Y}^{-\dim{\mu}}
        \sum_{e\in\Z} \ivBrack{e\ge \dim{\mu}}  \binom{e }{\dim{ \mu}}\redPoinSer{Y}^{e} \quad \text{(let $e=d+\dim{\mu}$)} \\
= {}&
        \frac{1}{(1-\redPoinSer{Y})^{\dim{\mu} + 1}}
\end{align*}
where the last equality follows from the ordinary generating function \eqref{eq: StanGenFuncOfBinomCoeff} and
        noting that the formal power series $\redPoinSer{Y}$ has constant term zero.

Hence,
\begin{align*}
 {\mathcal{S}}  &= \sum_{\mu \in \multInd} \redBetti{A}{\mu} \ivBrack{\mu\neq \emptyset}
\frac{\ind^{\length{\mu} + \dim{\mu} }}{(1-\ind)^{\dim{\mu}}} \cdot \frac{1}{(1-\redPoinSer{Y})^{\dim{\mu} + 1}} \\
&=  \sum_{\mu \in \multInd} \redBetti{A}{\mu}
 \ivBrack{\mu\neq\emptyset} \frac{\ind^{\length{\mu} + \dim{\mu} }}{(1-\ind)^{\dim{\mu}}(1-\redPoinSer{Y})^{\dim{\mu} + 1}}
 \\
&=   \sum_{d\ge 1} \frac{\ind^{d}}{(1-\ind)^d(1-\redPoinSer{Y})^{d + 1}} \prod_{i=1}^d \sum_{\mu_i\ge 0} \redBetti{A}{\mu_i} t^{\mu_i} \\
&=   \sum_{d\ge 1} \frac{\ind^{d}}{(1-\ind)^d(1-\redPoinSer{Y})^{d + 1}}\redPoinSer{Y}^d \\
&= \frac{\ind\redPoinSer{A}}{(1-\redPoinSer{Y})\left((1-\ind)(1-\redPoinSer{Y})  - \ind\redPoinSer{A}\right)},
\end{align*}
where the last equality follows from the geometric series formula and the observation that the power series $\redPoinSer{Y}$ has constant term zero.
Therefore, combining this with equations \eqref{pf: CorPoinSerFormula Eq2}, \eqref{pf: CorPoinSerFormula Eq3} and \eqref{eq: defOfS},
    we obtain the required
\begin{align*}
 &\redPoinSer{\Omega(A\redProd \projSp{\infty}{\R}) \union_{A\redProd \projSp{1}{\R}} (Y\redProd \projSp{1}{\R})} \\
= {}  & \frac{\redPoinSer{Y}}{1- \redPoinSer{Y}}
         +  \frac{\ind\redPoinSer{A}}{(1-\redPoinSer{Y})\left(1-\ind - (1 - \ind)\redPoinSer{Y} - \ind\redPoinSer{A}\right)} \\
= {}  & \frac{                (1-\ind)\redPoinSer{Y} + \ind \redPoinSer{A} }
              {   1 - \ind - (1-\ind)\redPoinSer{Y} - \ind\redPoinSer{A}  }
\end{align*}
\end{proof}

\section{Examples} \label{sec: examples}

In this section, we give some examples of theorem \ref{thm: PoinSer} and explain the relation to a spectral sequence studied by Bousfield-Curtis.

The following is theorem 10.2 of \cite{BousfieldCurtis70}.
Recall our convention that homology is taken modulo 2.
Let $X$ be a reduced simplicial set.
Let $GX$ be the simplicial group which is Kan's construction \cite{Kan58}, whose underlying space is $\Loop X$.
Filter the group ring $\F_2 (GX)$ by powers of the augmentation ideal
\[
\cdots \subset I^{n+1} \subset I^n \subset \cdots \subset I^1 \subset I^0 = \F_2 (GX)
\]
The associated spectral sequence $\{\bar{E}^r, d^r:\bar{E}^r_{s,t} \to \bar{E}^r_{s+r,t-1}\}$ has the following properties:
\begin{enumerate}[(a)]
\item For $r\ge 1$, the $\bar{E}^r$ term is a differential graded Hopf algebra.
\item The $\bar{E}^1$ term is given by $\bar{E}^1_{s,t} = \pi_t(I^s/I^{s+1})$.
As algebras, the $\bar{E}^1$ term is isomorphic to the tensor algebra $T(s^{-1}\redH_*(X))$.
Here, for a graded vector space $V_*$, its {\emph{desuspension}} $(s^{-1}V)_*$ is the graded vector space given by $(s^{-1}V)_{n} = V_{n+1}$ for all $n\ge 0$.
The differential $d^1$ on $\bar{E}^1_{1,*}$ is given by the comultiplication of $\redH_*(X)$.
\item If $X$ is simply-connected, then the spectral sequence $\{\bar{E}^r, d^r\}$ converges to $H_*(G X)$.
            Thus $\bar{E}^\infty$ is the graded Hopf algebra associated with a decreasing filtration of $H_*(G X) \isom H_*(\Loop X)$.
\end{enumerate}

For $V_*$ a graded vector space, let $\chi(V_*) := \sum_{q\ge 0} \ind^q \dim V_q$ denote its Euler-Poincar\'{e} series.
We compute the Euler-Poincar\'{e} series of the $\bar{E}^1$ and $\bar{E}^\infty$ terms.
Note that
\[
\chi(T(s^{-1}\redH_*(X))) = \frac{1}{1- \chi(s^{-1}\redH_*(X))} = \frac{1}{1 - \ind^{-1}\redPoinSer{X}} = \frac{\ind}{\ind - \redPoinSer{X}}
\]
Hence, by (b),
\begin{equation}\label{eq: E1EulerSeries}
\chi(\bar{E}^1) = \frac{\ind}{\ind - \redPoinSer{X}}
\end{equation}
Furthermore, if $X$ is simply-connected, then by (c),
\begin{equation} \label{eq: EInftyEulerSeries}
\chi(\bar{E}^\infty) = \chi(H_*(\Loop X)) = \chi(\redH_*(\Loop X)) + 1 = \redPoinSer{\Loop X} + 1
\end{equation}

We give a criterion for the spectral sequence $\{\bar{E}^r,d^r\} $ to collapse at the $\bar{E}^1$ term.
\begin{proposition} \label{prop: BousfieldCurtis}
Let $X$ be a pointed space whose mod 2 homology groups are finite-dimensional.
Suppose that $X$ is simply-connected and the map $(\redDiag{X})_*: \redH_*(X)\to\redH_*( X\redProd X)$ induced by the reduced diagonal of $X$ is the zero map.
Then the spectral sequence $\{\bar{E}^r,d^r\} $ collapses at the $\bar{E}^1$ term and
\begin{equation} \label{eq: BousfieldCurtisEq}
\redPoinSer{\Loop X} = \frac{\redPoinSer{X}}{\ind - \redPoinSer{X} }
\end{equation}
\end{proposition}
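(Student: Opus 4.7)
The plan is to establish that the Bousfield-Curtis spectral sequence collapses at the $\bar{E}^1$ term, and then to read off \eqref{eq: BousfieldCurtisEq} by comparing Euler-Poincar\'e series.

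The first step is to check that $d^1 = 0$. By property (b), the restriction of $d^1$ to the generators $\bar{E}^1_{1,*}$ coincides with the comultiplication $(\redDiag{X})_*$, which vanishes by hypothesis. Property (a) makes $\bar{E}^1$ a differential graded Hopf algebra, so $d^1$ is a derivation with respect to the tensor algebra multiplication; since $\bar{E}^1 \cong T(s^{-1}\redH_*(X))$ is generated as an algebra by $\bar{E}^1_{1,*}$, the vanishing on generators forces $d^1 = 0$ on the entire page. I would then push this to full collapse by induction on $r$: assuming $d^1 = \cdots = d^{r-1} = 0$, the page $\bar{E}^r$ remains isomorphic to $T(s^{-1}\redH_*(X))$ as a differential graded Hopf algebra, and $d^r$ is again simultaneously a derivation and a coderivation. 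A Hopf-algebraic analysis of the primitive subspace in higher filtration degrees pins down $d^r = 0$ on the generators, whence $d^r = 0$ throughout by the derivation property. Alternatively, one may appeal directly to a collapse criterion internal to the Bousfield-Curtis framework \cite{BousfieldCurtis70}, since the result is advertised there as an immediate consequence of their setup.

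With $\bar{E}^\infty \cong \bar{E}^1$ in hand, combining \eqref{eq: E1EulerSeries} and \eqref{eq: EInftyEulerSeries} gives
\[
\redPoinSer{\Loop X} + 1 = \chi(\bar{E}^\infty) = \chi(\bar{E}^1) = \frac{\ind}{\ind - \redPoinSer{X}},
\]
and subtracting $1$ yields \eqref{eq: BousfieldCurtisEq}. The main obstacle will be the inductive step for higher differentials: in characteristic $2$ the tensor Hopf algebra carries extra primitive elements in every filtration degree of the form $2^k$ (for instance, $v^{2^k}$ is primitive for any generator $v$), so ruling out nontrivial targets of $d^r$ requires more care than over a field of characteristic zero.
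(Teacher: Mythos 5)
Your overall route is the same as the paper's: deduce $d^1=0$ from property (b) and the hypothesis on $(\redDiag{X})_*$, conclude $\bar{E}^1=\bar{E}^\infty$, and then compare the Euler--Poincar\'e series \eqref{eq: E1EulerSeries} and \eqref{eq: EInftyEulerSeries} and subtract $1$ to get \eqref{eq: BousfieldCurtisEq}. That last comparison is word-for-word the paper's argument, and your use of the derivation property to propagate the vanishing of $d^1$ from $\bar{E}^1_{1,*}$ to the whole tensor algebra is a harmless elaboration of what the paper leaves implicit.

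The divergence is your inductive treatment of the higher differentials, and there the argument does not close --- as you yourself observe. The derivation property only reduces $d^r$ to its values on the algebra generators, which sit in filtration degree $s=1$; but properties (a)--(c) say nothing about $d^r$ on that column for $r\ge 2$ (property (b) concerns $d^1$ only), and the coderivation/primitivity argument cannot finish the job, because over $\F_2$ the primitives of $T(s^{-1}\redH_*(X))$ contain the free restricted Lie algebra (iterated brackets as well as the $2^k$-th powers you mention), hence occur in every filtration degree, so knowing that $d^r$ of a generator is primitive does not force it to vanish. So, taken on its own terms, your induction is a genuine gap: the collapse is not a formal consequence of $d^1=0$ together with (a)--(c). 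You should know, however, that the paper's own proof supplies nothing more at this point: it passes from ``$d^1=0$'' to ``$\bar{E}^1=\bar{E}^\infty$'' in a single clause, in effect deferring the vanishing of all higher differentials to the framework of \cite{BousfieldCurtis70}, from whose theorem 10.2 the properties (a)--(c) are quoted. Your fallback option --- citing the collapse statement from \cite{BousfieldCurtis70} rather than reproving it Hopf-algebraically --- is therefore exactly what aligns your write-up with the paper; if you want a self-contained proof, the step you flagged is the one place where additional input beyond (a)--(c) is genuinely needed.
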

\begin{proof}
The comultiplication on $\redH_*(X)$ is induced by the reduced diagonal map $\redDiag{X} : X \to X\redProd X$.
Hence, by (b) above, if $(\redDiag{X})_*: \redH_*(X)\to\redH_*( X\redProd X)$ is the zero map, then $d^1 = 0$, so that $\bar{E}^1 = \bar{E}^\infty$.
Furthermore, if $X$ is simply-connected, then using \eqref{eq: E1EulerSeries} and \eqref{eq: EInftyEulerSeries} and comparing Euler-Poincar\'{e} series,
we have
\[
 \frac{\ind}{\ind - \redPoinSer{X}} = \redPoinSer{\Loop X} + 1
\]
Equation \eqref{eq: BousfieldCurtisEq} follows by subtracting $1$ from both sides.
\end{proof}
As noted in the introduction, this proposition holds when $X$ is a simply-connected co-H-space.
This gives yet another proof of \eqref{eq: redPoinSerOfLoopSus} when $K= \F_2$.
In more detail, let a path-connected pointed space $Y$ be given.
Then $\sus Y$ is a simply-connected co-H-space, so by proposition \ref{prop: BousfieldCurtis},
\begin{align*}
\redPoinSer{\Loop \sus X} &= \frac{\redPoinSer{\sus Y}}{\ind - \redPoinSer{\sus Y}} \\
&= \frac{\ind\redPoinSer{Y}}{\ind - \ind\redPoinSer{ Y}} \\
&= \frac{\redPoinSer{Y}}{1 - \redPoinSer{Y}}
\end{align*}

Theorem \ref{thm: PoinSer} has the following consequence which relates to this spectral sequence $\{\bar{E}^r,d^r\} $ studied by Bousfield-Curtis.
\begin{corollary} \label{cor: hardWork}
Let $A\inc Y$ be a based inclusion of pointed spaces,
    both of whose mod 2 homology groups are finite-dimensional.
Suppose that this inclusion induces a monomorphism $\redH_*(A) \rightarrowtail \redH_*(Y)$ in reduced homology.
Suppose further that $Y$ is path-connected
and the map $(\redDiag{A})_* : \redH_*(A)\to \redH_*(A\redProd A)$ induced by the reduced diagonal map of $A$ is the zero map.
Then the spectral sequence $\{\bar{E}^r,d^r\} $ collapses at the $\bar{E}^1$ term.
\end{corollary}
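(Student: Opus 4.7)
Let $X := (A \redProd \projSp{\infty}{\R}) \union_{A \redProd \projSp{1}{\R}} (Y \redProd \projSp{1}{\R})$ denote the space whose loop Poincar\'e series is computed by Theorem \ref{thm: PoinSer}. My plan is as follows: first, verify that $X$ is simply-connected so that the Bousfield-Curtis spectral sequence $\{\bar{E}^r, d^r\}$ converges; next, compute $\redPoinSer{X}$ via Mayer-Vietoris; and finally, show by direct algebraic manipulation that the Euler-Poincar\'e series of $\bar{E}^1$ and $\bar{E}^\infty$ agree termwise. Since each differential $d^r$ has $\ind$-degree $-1$ and strictly decreases the $\ind$-graded dimensions whenever it is nonzero, and since $\chi(\bar{E}^r)\ge \chi(\bar{E}^{r+1})\ge \chi(\bar{E}^\infty)$ termwise for all $r$, this termwise equality will force $d^r=0$ for every $r\ge 1$. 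For simply-connectedness, the hypothesis $\redH_*(A)\hookrightarrow \redH_*(Y)$ combined with $\redH_0(Y)=0$ yields $\redH_0(A)=0$, so the finite-dimensionality assumption forces $A$ to be path-connected; both $A\redProd \projSp{\infty}{\R}$ and $Y\redProd\projSp{1}{\R}$ are then smashes of path-connected spaces, hence simply-connected, and a van Kampen argument applied to the defining pushout (along the path-connected subspace $A\redProd\projSp{1}{\R}$) gives $\pi_1(X)=1$.

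To compute $\redPoinSer{X}$, apply Mayer-Vietoris to the defining pushout square. The map $\redH_*(A\redProd \projSp{1}{\R})\to \redH_*(Y\redProd \projSp{1}{\R})$ is the tensor product (via K\"unneth) of the monomorphism $\redH_*(A)\hookrightarrow \redH_*(Y)$ with the identity on $\redH_*(\projSp{1}{\R})$, hence injective. Therefore the natural map $\redH_*(A\redProd\projSp{1}{\R})\to \redH_*(A\redProd \projSp{\infty}{\R})\oplus \redH_*(Y\redProd\projSp{1}{\R})$ is already injective, the Mayer-Vietoris long exact sequence splits into short exact sequences, and
\[
\redPoinSer{X} = \redPoinSer{A\redProd \projSp{\infty}{\R}} + \redPoinSer{Y\redProd\projSp{1}{\R}} - \redPoinSer{A\redProd\projSp{1}{\R}}.
\]
Using $\redPoinSer{\projSp{\infty}{\R}} = \ind/(1-\ind)$, $\redPoinSer{\projSp{1}{\R}} = \ind$ and K\"unneth, this simplifies to
\[
\redPoinSer{X} = \frac{\ind^{2}\,\redPoinSer{A} + \ind(1-\ind)\,\redPoinSer{Y}}{1-\ind}.
\]

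A short algebraic manipulation of the formula in Theorem \ref{thm: PoinSer}, together with \eqref{eq: E1EulerSeries} and \eqref{eq: EInftyEulerSeries}, then verifies
\[
\chi(\bar{E}^\infty) = 1 + \redPoinSer{\Loop X} = \frac{1-\ind}{1-\ind-(1-\ind)\redPoinSer{Y}-\ind\,\redPoinSer{A}} = \frac{\ind}{\ind-\redPoinSer{X}} = \chi(\bar{E}^1).
\]
As termwise equality $\chi(\bar{E}^r)=\chi(\bar{E}^{r+1})$ across two consecutive pages is equivalent to vanishing of the corresponding differential, this forces $d^r=0$ for all $r\ge 1$, i.e.\ collapse at $\bar{E}^1$. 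The one genuinely delicate step is the Mayer-Vietoris injectivity argument in the second paragraph; this is precisely where the newly-added monomorphism hypothesis $\redH_*(A)\hookrightarrow \redH_*(Y)$ (absent from Theorem \ref{thm: PoinSer}) is needed. Everything else is either routine power-series algebra or a direct application of results already in the paper.
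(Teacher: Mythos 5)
Your proposal is correct and follows essentially the same route as the paper's proof: Mayer--Vietoris splitting via the monomorphism hypothesis to compute $\redPoinSer{X}$, simple-connectivity via van Kampen, and comparison of the Euler--Poincar\'e series of $\bar{E}^1$ and $\bar{E}^\infty$ using \eqref{eq: E1EulerSeries}, \eqref{eq: EInftyEulerSeries} and theorem \ref{thm: PoinSer}. The only (harmless) deviations are that you deduce path-connectedness of $A$ from the monomorphism into $\redH_*(Y)$ rather than from the vanishing of $(\redDiag{A})_*$, and that you spell out the standard ``equal Euler series forces collapse'' argument which the paper leaves implicit.
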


\begin{proof}
As we are taking homology with coefficients in $\F_2$, the inclusion $\projSp{1}{\R}\inc\projSp{\infty}{\R}$
    induces a monomorphism $\redH_*(\projSp{1}{\R}) \rightarrowtail \redH_*(\projSp{\infty}{\R})$ in reduced homology.
Since the inclusion $A\inc Y$ induces a monomorphism $\redH_*(A) \rightarrowtail \redH_*(Y)$ in reduced homology,
the induced maps $\phi:\redH_*(A\redProd \projSp{1 }{\R}) \to \redH_*(Y\redProd \projSp{1 }{\R})$
    and $\psi: \redH_*(A\redProd \projSp{1}{\R}) \to \redH_*(A\redProd \projSp{\infty}{\R})$ are also monomorphisms.
This implies that the Mayer-Vietoris long exact sequence associated to the union $X:=(A\redProd \projSp{\infty}{\R}) \union_{A\redProd \projSp{1}{\R}} (Y\redProd \projSp{1}{\R})$
    splits into short exact sequences
    \[
0\to \redH_q(A\redProd \projSp{1}{\R}) \xrightarrow{(\phi,\psi)} \redH_q(Y\redProd \projSp{1}{\R}) \oplus \redH_q(A\redProd \projSp{\infty}{\R}) \to \redH_q(X) \to 0
    \]
Since we are taking coefficients in a field $\F_2$, these short exact sequences split. Hence
\begin{align*}
\redPoinSer{X} &= \redPoinSer{Y\redProd \projSp{1}{\R}} + \redPoinSer{A\redProd \projSp{\infty}{\R}} - \redPoinSer{A\redProd \projSp{1}{\R}} \\
&= \ind \redPoinSer{Y} + \frac{\ind}{1-\ind}\redPoinSer{A} - \ind\redPoinSer{A} \\
&= \ind \redPoinSer{Y} + \frac{\ind^2}{1-\ind}\redPoinSer{A}
\end{align*}
Substituting this into \eqref{eq: E1EulerSeries} gives
\begin{align*}
\chi(\bar{E}^1)
&= \frac{\ind}{\ind - \left(\ind \redPoinSer{Y} + \frac{\ind^2}{1-\ind}\redPoinSer{A}\right)}   \\
&= \frac{                1-\ind }
              {   1 - \ind - (1-\ind)\redPoinSer{Y} - \ind\redPoinSer{A}  }
\end{align*}

Since $Y$ is path-connected, the smash product $Y\redProd \projSp{1}{\R}$ is simply-connected.
Since $(\redDiag{A})_* : \redH_*(A)\to \redH_*(A\redProd A)$ is the zero map,
     $A$ is path-connected so that $A \redProd \projSp{\infty}{\R}$ is simply-connected.
Hence the van-Kampen theorem implies that $X$ is simply-connected.
Thus we may use \eqref{eq: EInftyEulerSeries} together with theorem \ref{thm: PoinSer} to obtain
\begin{align*}
\chi(\bar{E}^\infty) &=  \redPoinSer{\Loop X} + 1 \\
&= \frac{                (1-\ind)\redPoinSer{Y} + \ind \redPoinSer{A} }
              {   1 - \ind - (1-\ind)\redPoinSer{Y} - \ind\redPoinSer{A}  } + 1 \\
              &=  \frac{                1-\ind }
              {   1 - \ind - (1-\ind)\redPoinSer{Y} - \ind\redPoinSer{A}  }
\end{align*}
Thus $\chi(\bar{E}^1) = \chi(\bar{E}^\infty)$.
Hence in fact $\bar{E}^1=\bar{E}^\infty$,
that is to say, the spectral sequence $\{\bar{E}^r,d^r\} $ collapses at the $\bar{E}^1$ term.
\end{proof}

We now bring three extremal special cases of theorem \ref{thm: PoinSer} to the attention of the reader.

\begin{example} \label{ex: loopSus}
Let $Y$ be a path-connected pointed space.

Taking $A = \pt$ in theorem \ref{thm: PoinSer}, we have
\[
            \redPoinSer{\Loop\sus Y}
                = \frac{                \redPoinSer{Y} }{   1  - \redPoinSer{Y}  }
            \]
This gives yet another proof of \eqref{eq: redPoinSerOfLoopSus} in the case where $K = \F_2$.
Furthermore, corollary \ref{cor: hardWork} implies that the spectral sequence $\{\bar{E}^r,d^r\} $ collapses at the $\bar{E}^1$ term,
    which agrees with proposition \ref{prop: BousfieldCurtis}.
\end{example}

\begin{example} \label{ex: wuExample}
Let $A$ be a pointed space such that the map $(\redDiag{A})_* : \redH_*(A)\to \redH_*(A\redProd A)$ induced by its reduced diagonal map is zero.
In particular, $A$ is path-connected.

Hence, taking $A= Y$ in theorem \ref{thm: PoinSer}, we have
\[
            \redPoinSer{\Loop(A\redProd \projSp{\infty}{\R})}
                = \frac{                \redPoinSer{A} }{   1 - \ind - \redPoinSer{A}  }
            \]
Furthermore, corollary \ref{cor: hardWork} implies that the spectral sequence $\{\bar{E}^r,d^r\} $ collapses at the $\bar{E}^1$ term.
This agrees with proposition \ref{prop: BousfieldCurtis}.
Indeed, since $A$ is path-connected, $A\redProd \projSp{\infty}{\R}$ is simply-connected.
Also, since $(\redDiag{A})_* : \redH_*(A)\to \redH_*(A\redProd A)$ is the zero map, $(\redDiag{A\redProd \projSp{\infty}{\R}})_* : \redH_*(A\redProd \projSp{\infty}{\R})\to \redH_*((A\redProd \projSp{\infty}{\R})\redProd (A\redProd \projSp{\infty}{\R}))$ is zero.
Thus the conditions in proposition \ref{prop: BousfieldCurtis} hold.
\end{example}

\begin{example}
Let $A$ be a pointed space such that the map $(\redDiag{A})_* : \redH_*(A)\to \redH_*(A\redProd A)$ induced by its reduced diagonal map is zero.

Hence, taking $Y = CA$ to be the cone of $A$ in theorem \ref{thm: PoinSer} and noting that $CA$ is contractible, we have
\[
            \redPoinSer{\Loop(A\redProd (\projSp{\infty}{\R}/\projSp{1}{\R}))}
                = \frac{                \ind\redPoinSer{A} }{   1 - \ind - \ind\redPoinSer{A}  }
\]
In this case, corollary \ref{cor: hardWork} does not apply.
However, we know from proposition \ref{prop: BousfieldCurtis} and an argument similar to example \ref{ex: wuExample},
that the spectral sequence $\{\bar{E}^r,d^r\} $ does collapse at the $\bar{E}^1$ term.
\end{example}

We end this article by giving an example of theorem \ref{thm: PoinSer} not covered by the above examples.
In the process, we prove a conjecture of the first and third authors (see conjecture 6.1 in \cite{GaoWu13}).
\begin{example} \label{ex: LangExample}
Let $X$ be the pointed $C_2$-space
    which is the union of  two 2-spheres $S^2$ with the antipodal involution under which their equatorial circles are identified.
As noted in \cite{GaoWu13},
    this pointed $C_2$-space $X$ is equivariantly homotopic to the pointed $C_2$-space $S^2 \cup_{S^1} S^2$
        with the action associated to the based involution of switching the two copies of $S^2$.
Hence, by lemma \ref{lem: unionAsStuntRedBor} and taking $A=S^1$ and $Y= S^2$ in theorem \ref{thm: PoinSer},
\begin{equation} \label{eq: LangExample}
\begin{aligned}
\redPoinSer{\Loop (\stuntRedBor{X}{C_2})}
    &= \frac{
                (1-\ind)(\ind^2) +\ind (\ind)
                }
                {
                1 - \ind - (1-\ind)(\ind^2) - \ind(\ind)
                } \\
    &= \frac{1-\ind}{\ind^3 - 2\ind^2 - \ind + 1} - 1,
\end{aligned}
\end{equation}
which proves conjecture 6.1 in \cite{GaoWu13}.
Since the induced map $\redH_*(S^1) \to \redH_*(S^2)$ is not a monomorphism, corollary \ref{cor: hardWork} does not apply.

From the generating function \eqref{eq: LangExample},
    the mod 2 reduced Betti numbers of $\Loop (\stuntRedBor{X}{C_2})$ form the sequence
$$\{0,2,1,5,5,14,19,42,66,131,221,417,\ldots \}$$
As mentioned in the introduction, this is essentially sequence A052547 in the On-Line Encyclopedia of Integer Sequences \cite{OEIS}.
This sequence is related to the Chebyshev polynomials used in approximation theory.
\end{example}
This example suggests that the mod 2 reduced Betti numbers of the loop space studied in theorem \ref{thm: PoinSer}
    may provide a geometric interpretation for certain combinatorial sequences.
This application to combinatorics may be of interest to pursue further.

{\textbf{Acknowledgements.}}
The first author thanks the organizers
    of the 2014 ICM satellite conference in algebraic topology in Dalian, China
    for the opportunity to speak about this material.
The second author thanks the organizers of the 5th East Asian conference in algebraic topology held in Beijing, China in 2013
    for the opportunity to speak about preliminary versions of this material.

\end{document}